\newcounter{nthm}
\newcounter{nthmm}
\newtheorem{thm}{Theorem}
\newtheorem{thm*}[nthm]{Theorem}
\newtheorem{thm**}[nthmm]{Th\'eor\`eme}
\newtheorem*{defn*}{Definition}
\newtheorem{prop}{Proposition}
\newtheorem{prop*}[nthm]{Proposition}
\newtheorem{lemma}{Lemma}
\newtheorem{question}{Question}
\newcommand{\N}{\mathbb{N}}
\newcommand{\Q}{\mathbb{Q}}
\newcommand{\m}[1]{\textbf{#1}}
\newcommand{\LO}{\mathrm{LO}}
\newcommand{\Aut}{\mathrm{Aut}}
\newcommand{\Age}{\mathrm{Age}}
\newcommand{\actson}{\curvearrowright}
\author{Lionel Nguyen Van Th\'e}
\address{Aix Marseille Univ, CNRS, Centrale Marseille, I2M UMR 7373, 13453 Marseille, France}
\email{lionel.nguyen-van-the@univ-amu.fr}
\thanks{This work has been partially supported by the GrupoLoco project (ANR-11-JS01-0008) funded by the French Government, managed by the French National Research Agency (ANR)}
\subjclass[2010]{Primary: 37B05 
; Secondary: 
03C15 
22F50 
54H20 
}
\keywords{Glasner's problem, Minimal almost periodicity, Bohr compactification}
\date{June 2018}
\title[Glasner's problem]{Glasner's problem for Polish groups with metrizable universal minimal flow}
\begin{document}
\maketitle

\begin{abstract}
A problem of Glasner, now known as Glasner's problem, asks whether there exists a minimally almost periodic, monothetic, Polish group that is not extremely amenable. The purpose of this short note is to observe that a negative answer is obtained under the additional assumption that the universal minimal flow is metrizable.    
\end{abstract}

\section{Introduction}

In \cite{Glasner1998}, Eli Glasner asked whether there exists a minimally almost periodic, monothetic, Polish group that is not extremely amenable. (Recall that a topological group $G$ is \emph{minimally almost periodic} when it admits no non-trivial continuous character, \emph{monothetic} when it contains a dense cyclic subgroup, and \emph{extremely amenable} when every continuous $G$-action on a compact Hausdorff space has a fixed point.) While many experts in the field, including Glasner himself, seem to be convinced that such a group does exist, this problem  - now known as \emph{Glasner's problem} - is still largely open in general (a detailed account can be found in Pestov's contribution in \cite{Pearl07}). The main purpose of this short paper is to observe that recent results in topological dynamics provide an easy negative answer under additional assumptions.

The proof is based on a simple description of two classical flows (i.e. continuous actions on compact Hausdorff spaces) attached to any (Hausdorff) topological group $G$, which we will describe now. Given a flow $G\actson X$ and $x,y\in X$, the ordered pair $(x,y)$ is \emph{proximal} when there exists a net $(g_{\alpha})_\alpha$ of elements of $G$ such that $\lim_\alpha g_\alpha \cdot x = \lim_\alpha g_\alpha \cdot y$. It is \emph{distal} if $x = y$ or $(x,y)$ is not proximal. The flow $X$ is then \emph{proximal} (resp. distal) when every $(x,y)\in X^{2}$ is proximal (resp. distal). Particular cases of distal flows are provided by \d{equicontinuous} flows, which are those for which for every $x\in X$ and $\epsilon\in \mathcal U_{X}$ (the uniformity of $X$), there exists a neighborhood $U$ of $x$ so that $(g\cdot x, g\cdot y)\in \epsilon$ for every $g\in G$ and $y\in U$. 

By classical results, every topological group $G$ admits a unique universal minimal object $M(G)$ within the class of all flows (that is, a minimal flow that maps onto every other minimal flow), and the same holds in restriction to the classes of all distal, equicontinuous and proximal flows. For these two latter classes, the universal minimal flows are denoted by $B(G)$ and $\Pi(G)$ respectively. It may happen that these flows trivialize, in the sense that they reduce to a single point: this happens for $M(G)$ exactly when $G$ is extremely amenable, and for $B(G)$ exactly when $G$ is minimally almost periodic. Clearly, every extremely amenable group is minimally almost periodic, so Glasner's problem really asks whether the converse holds for monothetic groups. Now, every such group being abelian, it is also \emph{strongly amenable} in the sense that $\Pi(G)$ trivializes (see \cite{Glasner1976}*{II.3.4}). Therefore, the following result provides an answer in the case where $M(G)$ is metrizable: 

\begin{thm}
\label{cor}
Let $G$ be a Polish group and assume that $M(G)$ is metrizable. Suppose that $G$ is strongly amenable and minimally almost periodic. Then $G$ is extremely amenable. 
\end{thm}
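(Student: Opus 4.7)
The plan is to derive $M(G) = \{*\}$ by isolating the maximal distal factor of $M(G)$, forcing it to be trivial using minimal almost periodicity, and then bootstrapping this to triviality of $M(G)$ itself using strong amenability.

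First, I would recall that $B(G)$ coincides with the maximal equicontinuous factor of $M(G)$: any equicontinuous minimal factor of $M(G)$ is itself a factor of $B(G)$, while $B(G)$ is a factor of $M(G)$ by universality of $M(G)$. Since every equicontinuous flow is distal, this maximal equicontinuous factor factors through the maximal distal factor; denoting the latter by $D$, the maximal equicontinuous factor of $D$ equals $B(G)$, which is trivial by the minimal almost periodicity assumption.

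Second, I would invoke Furstenberg's structure theorem for metric minimal distal flows: every such flow is the top of a (countable) tower of equicontinuous extensions of the trivial flow, so a non-trivial one always admits a non-trivial equicontinuous factor. The flow $D$ is metrizable as a continuous image of metrizable $M(G)$, and we have just seen that its maximal equicontinuous factor is trivial; Furstenberg's theorem therefore forces $D$ to be a single point.

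Third, I would use the classical fact that the factor map from any minimal flow onto its maximal distal factor is a proximal extension, i.e.\ two points lying in a common fibre are proximal. Since $D = \{*\}$, the unique fibre is all of $M(G)$, so every pair in $M(G)$ is proximal; in other words, $M(G)$ is itself a proximal flow. Being minimal, it is a factor of the universal minimal proximal flow $\Pi(G)$, which is trivial by strong amenability. Hence $M(G) = \{*\}$, i.e.\ $G$ is extremely amenable.

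The step I expect to scrutinise most carefully is the last classical ingredient, namely that $M(G) \to D$ is always a proximal extension (this is where the proximal and distal structures genuinely interact, the proximal relation in $M(G)$ not being transitive in general). Locating the cleanest reference in Auslander's \emph{Minimal Flows and their Extensions} or Glasner's \emph{Proximal flows}, and checking that no additional hypothesis on $G$ is needed, is the only point where care is required; the other ingredients are routine universal-property manipulations and the classical Furstenberg theorem.
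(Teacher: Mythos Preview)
Your argument has a genuine gap at the third step. The claim that the factor map from a minimal flow onto its maximal distal factor is a proximal extension is false, and it fails precisely in the setting at hand. Take $G=S_\infty$: it is minimally almost periodic, so your Steps~1--3 correctly yield that the maximal distal factor of $M(G)$ is a point; yet $M(G)$ is the space $\LO(\mathbb N)$ of linear orders on $\mathbb N$, and this flow is \emph{not} proximal --- its universal minimal proximal flow is the strictly smaller space of betweenness relations (see the last paragraph of Section~\ref{subsection:comments}). Thus the map $M(G)\to\{*\}$ is not a proximal extension. More elementarily, any non-trivial topologically weakly mixing minimal $\mathbb Z$-flow already shows that a trivial maximal distal factor does not force proximality, since $\mathbb Z$ is strongly amenable and hence has no non-trivial minimal proximal flow. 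Your instinct to flag this step was exactly right, but no reference will rescue it: the failure of transitivity of the proximal relation is a genuine obstruction, not a technicality.

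The paper's route is different and rests on the structure theorem for Polish groups with metrizable $M(G)$ (Theorem~\ref{c:SA1}): one has $M(G)=\widehat{G/G^*}$ for a closed co-precompact extremely amenable subgroup $G^*$, and strong amenability of $G$ is \emph{equivalent} to $G^*$ being normal, which in turn is equivalent to $M(G)$ itself being equicontinuous. Combined with Theorem~\ref{th:UMDF} this gives $B(G)=G/G^*$, and minimal almost periodicity then forces $G^*=G$. In other words, strong amenability is used not at the end to pass from ``proximal'' to ``trivial'', but at the outset to force $M(G)$ to already be equicontinuous, after which minimal almost periodicity kills it directly. Your Steps~1--3 essentially rederive the identification of the universal minimal distal flow with $B(G)$ in the metrizable case (the content of Theorem~\ref{th:UMDF}); it is the bridge from there to the conclusion that genuinely requires the structure theorem rather than a general proximal/distal dichotomy.
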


Polish groups with metrizable universal minimal flows have been at the center of several recent developments in topological dynamics due to their connection with Ramsey theory. For example, building on the seminal work of Kechris, Pestov and Todorcevic \cite{Kechris05} and its extension \cite{NVT13a}, universal minimal flows and their proximal analogues have been described in \cite{Melleray16} when $G$ is Polish and $M(G)$ metrizable with a generic orbit. Combining the corresponding result with those of  \cite{BenYaacov17} by Ben Yaacov, Melleray and Tsankov (which itself builds on \cite{Zucker16} by Zucker) leads to the following theorem: 

\begin{thm}[\cite{Melleray16}, \cite{BenYaacov17}]
\label{c:SA1}
Let $G$ be a Polish group with metrizable universal minimal flow $G\actson M(G)$. Then there exists a closed, co-precompact, extremely amenable, subgroup $G^{*}$ of $G$ such that $M(G) = \widehat{G/G^{*}}$. In addition, there exists a closed, co-precompact, strongly amenable, subgroup $G^{**}$ of $G$ such that $\Pi(G) = \widehat{G/G^{**}}$; namely, $G^{**}=N(G^{*})$, the normalizer of $G^{*}$ in $G$. In particular, $G$ is strongly amenable iff $G^{*}$ is normal in $G$ iff $M(G)$ is a compact group iff $M(G)$ is distal iff $M(G)$ is equicontinuous.
\end{thm}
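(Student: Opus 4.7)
The plan is to combine the two cited inputs and extract the equivalences as a formal consequence. First, I would invoke the Ben Yaacov--Melleray--Tsankov theorem from \cite{BenYaacov16c}, itself resting on Zucker's work \cite{Zucker16}: when $G$ is Polish with $M(G)$ metrizable, there exists a closed, co-precompact, extremely amenable subgroup $G^* \le G$ and a $G$-equivariant uniform isomorphism between $M(G)$ and the completion $\widehat{G/G^*}$ of the left coset space endowed with its canonical uniformity. This delivers the first assertion of the statement.

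Second, I would invoke Melleray's identification of the universal minimal proximal flow from \cite{Melleray16}: in the same setting, $\Pi(G) = \widehat{G/N(G^*)}$. The subgroup $N(G^*)$ is then automatically closed and co-precompact (since it contains $G^*$), and its strong amenability is a short consequence of the fact that, by definition of the normalizer, $G^*$ is normal in $N(G^*)$: extreme amenability of $G^*$ then implies that the $G^*$-fixed point set of any minimal $N(G^*)$-flow is non-empty, closed, and $N(G^*)$-invariant, hence equal to the whole flow; consequently every $N(G^*)$-action factors through the precompact topological group $N(G^*)/G^*$, whose universal minimal flow is the equicontinuous compact group $\widehat{N(G^*)/G^*}$. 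The main technical point, which is the genuine contribution of \cite{Melleray16} and which I would take as given, is to pin down the proximal equivalence relation inside $\widehat{G/G^*}$ as precisely the one induced by the action of $N(G^*)/G^*$ on $G/G^*$.

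The equivalences in the last sentence then follow by a short cycle. If $G$ is strongly amenable, $\Pi(G) = \widehat{G/N(G^*)}$ is a singleton, which forces $G = N(G^*)$, i.e.\ $G^*$ is normal in $G$. Conversely, when $G^*$ is normal, $G/G^*$ is a precompact Hausdorff topological group, and its completion $M(G) = \widehat{G/G^*}$ is a compact Hausdorff topological group; since compact groups carry a bi-invariant metric, $G$ acts on $M(G)$ by isometries, so $M(G)$ is equicontinuous. Equicontinuity implies distality, and a distal minimal flow has trivial maximal proximal factor, so $\Pi(G)$ is a point and $G$ is strongly amenable, closing the loop. The only nontrivial step in this entire proof is thus the identification $\Pi(G) = \widehat{G/N(G^*)}$; everything else is either an immediate consequence of the two cited descriptions or a standard fact about compact topological groups.
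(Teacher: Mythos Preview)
Your proposal is correct and matches the paper's treatment: the paper does not supply its own proof of this statement but records it as the combination of the cited results from \cite{BenYaacov16c} (building on \cite{Zucker16}) and \cite{Melleray16}, exactly as you outline. Your unpacking of the chain of equivalences from those two inputs is accurate and is precisely the routine verification the paper leaves implicit.
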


The first observation at the origin of the present paper is that a result of the same flavor holds for the distal and the equicontinuous universal minimal flows: 

\begin{thm}
\label{th:UMDF}

Let $G$ be a Polish group with metrizable universal minimal flow. Then the universal minimal distal flow coincides with $B(G)$, and $B(G)=G/H$ for some closed, co-compact subgroup $H$ of $G$. More precisely, writing $M(G) = \widehat{G/G^{*}}$ with $G^{*}$ a closed, co-precompact, extremely amenable subgroup of $G$, one can take $H=(G^{*})^{G}$, where $(G^{*})^{G}$ denotes the closed normal closure of $G^{*}$ in $G$, i.e. $$\displaystyle (G^{*})^{G} = \overline{\langle\bigcup_{g\in G} gG^{*}g^{-1}\rangle}$$

\end{thm}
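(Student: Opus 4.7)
The proof has two parts: identify $B(G)$ as the coset space $G/H$ with $H=(G^{*})^G$ (and show $H$ is co-compact), then verify that this same flow is universal among minimal distal $G$-flows.

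For the first part, the plan is to recognise $B(G)$ as the Bohr compactification $bG$ equipped with its translation action. Indeed, $bG$ is equicontinuous as a $G$-flow (a compact topological group acts on itself by uniform isometries for its two-sided uniformity) and minimal (the image of $G$ is dense), while any minimal equicontinuous $G$-flow has an enveloping group that is a compact topological quotient of $G$, hence of $bG$ by universality. So $B(G)=bG=\widehat{G/K}$ with $K=\ker(G\to bG)$, and the goal reduces to $K=H$. The containment $H\subseteq K$ uses extreme amenability of $G^{*}$: any continuous homomorphism from an extremely amenable group to a compact group is trivial (its image admits a dense orbit under an extremely amenable group, so reduces to a point), hence $G^{*}\subseteq K$, and the closedness and normality of $K$ then give $H\subseteq K$. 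For the reverse, I observe that $G/H$ is a Polish topological group (quotient of a Polish group by a closed normal subgroup) and precompact (as a quotient of $G/G^{*}$). Invoking Raikov completeness of Polish groups, $G/H$ coincides with its Raikov completion $\widehat{G/H}$, which is compact, so $G\to G/H$ is a continuous homomorphism to a compact group and factors through $bG$ by universality, yielding $K\subseteq H$. This simultaneously identifies $B(G)=G/H$ and shows that $H$ is co-compact.

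For the second part, I would show that any minimal distal $G$-flow $X$ is a factor of $B(G)=G/H$. The dynamical core is the following subclaim: \emph{$G^{*}$ acts trivially on any distal $G$-flow.} To prove it, fix $y\in X$; the orbit closure $\overline{G^{*}\cdot y}$ is a $G^{*}$-subflow, so by extreme amenability of $G^{*}$ it contains a $G^{*}$-fixed point $z$. Any net $(h_\alpha)$ in $G^{*}$ with $h_\alpha y\to z$ also satisfies $h_\alpha z=z$, so $(y,z)$ is proximal, and distality of $X$ forces $y=z$, whence $y$ is $G^{*}$-fixed. With the subclaim in hand, the kernel of the $G$-action on $X$ is a closed normal subgroup containing $G^{*}$, hence containing $H$, and $X$ is naturally a $G/H$-flow. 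By $G$-minimality and compactness of $G/H$, every $G/H$-orbit in $X$ is dense and closed, hence equal to $X$, so $X$ is a homogeneous space of $G/H=B(G)$.

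The main obstacle I anticipate is the co-compactness assertion, which hinges on Raikov completeness of Polish groups; I would cite this as a classical fact, or failing that retreat to the weaker statement $B(G)=\widehat{G/H}$ (the surrounding argument is insensitive to the distinction). The proximality-based subclaim in the second part is short, but constitutes the essential dynamical observation.
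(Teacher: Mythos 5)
Your proof is correct, and its second half (the identification $B(G)=G/(G^{*})^{G}$) is essentially the paper's argument: both show $G^{*}\subseteq\mathrm{Ker}\,\phi_{B}$ --- you via the triviality of compact-group compactifications of extremely amenable groups, the paper by pushing the $G^{*}$-fixed point of $M(G)=\widehat{G/G^{*}}$ forward to $B(G)$, which is the same observation in two guises --- and then both pass to the closed normal closure, note that the precompact Polish group $G/(G^{*})^{G}$ is compact (Raikov completeness, as you say; the paper states this tersely), and conclude by universality of the Bohr compactification. Where you genuinely diverge is the first assertion, that the universal minimal distal flow coincides with $B(G)$. The paper obtains this by citing a theorem from de Vries: a regular, metrizable, minimal distal flow is equicontinuous, applied to the universal minimal distal flow, which is always regular and is metrizable here as a factor of $M(G)$. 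You instead prove directly that $G^{*}$ fixes every point of every distal $G$-flow: extreme amenability yields a $G^{*}$-fixed $z\in\overline{G^{*}\cdot y}$, the pair $(y,z)$ is then proximal, and distality forces $y=z$; hence every minimal distal flow factors through the compact group $G/(G^{*})^{G}$ and, having closed dense orbits under it, is one of its homogeneous spaces. This is more elementary and self-contained --- it bypasses the regularity machinery and does not use metrizability of the distal flow at all, only the existence of the closed, co-precompact, extremely amenable $G^{*}$ (which is where metrizability of $M(G)$ enters). The only step worth making explicit is that your argument establishes that $B(G)$ enjoys the universal property of the universal minimal distal flow, so you must invoke the uniqueness of that universal object to conclude the two coincide; this is the same standard uniqueness the paper relies on throughout.
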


The second observation is that the combination of the two previous results, yields a direct proof of Theorem \ref{cor}: By Theorem \ref{c:SA1}, $M(G) = \widehat{G/G^{*}}$ with $G^{*}$ a closed, co-precompact subgroup of $G$, and because $G$ is strongly amenable, $G^{*}$ is normal in $G$. Therefore, $(G^{*})^{G}=G^{*}$ and by Theorem \ref{th:UMDF}, $B(G)=G/G^{*}$. But $G$ is minimally almost periodic, so $G^{*}=G$, as required.

The paper is organized as follows: Theorem \ref{th:UMDF} is proved and discussed in Section \ref{section:Proof}. The proof is completely elementary. In Section \ref{section:Examples}, it is used to provide an explicit description of the Bohr compactifications of all those groups $G \leq S_\infty$ that are given as automorphism groups of homogeneous graphs and tournaments. Some familiarity with Fra\"iss\'e theory and with \cite{Kechris05} is assumed. Finally, an open question is presented in Section \ref{section:Questions}.

\section{About Theorem \ref{th:UMDF}}

\label{section:Proof}

\subsection{Proof of Theorem \ref{th:UMDF}}

The proof of Theorem \ref{th:UMDF} rests on some well-known facts about the universal minimal distal and equicontinuous flows, which we shortly recall for completeness. Following \cite{deVries93}, these objects can be described in terms of enveloping semigroups of $G$. Recall first that a \emph{compact right topological semigroup} is a compact Hausdorff space $S$ together with a associative binary operation $*$ so that for every $t\in S$, the map $s\mapsto s*t$ is continuous from $S$ to $S$. An \emph{enveloping semigroup} for a topological group $G$ is a compact right topological semigroup $S$ together with a continuous (not necessarily injective) map $\phi : G\rightarrow S$ so that a) $\phi$ is a homomorphism of semigroups, b) $\phi$ has dense image and c) the map from $G\times S$ to $S$ defined by $(g,s)\mapsto \phi(g)*s$ is continuous. When, in addition $(S,*)$ is a group, it is a \emph{group-like compactification} of $G$. If one further assumes that $(S,*)$ is a topological group, it is a \emph{group compactification} of $G$. Among all group compactifications of $G$, there is a universal one, called the \emph{Bohr compactification} of $G$ and denoted by $\phi_{B}:G\rightarrow \phi_{B}(G)$. It has the following property: for every group compactification $\phi:G\rightarrow K$, there is a continuous homomorphism $\psi$ from $\phi_{B}(G)$ onto $K$ so that $\phi=\psi\circ\phi_{B}$. The homomorphism $\phi_{B}$ allows to see $\phi_{B}(G)$ as a $G$-flow, and $G\actson \phi_{B}(G)$ turns out to be the universal minimal equicontinuous flow of $G$, which we already denoted $G\actson B(G)$ (see \cite{Glasner1976}*{Chapter VIII}). For distal flows, the situation is similar, except that one considers group-like compactifications of $G$ instead of group compactifications (see \cite{deVries93}*{Chapter IV, Section 6.18}). 

Let us now turn to the proof of Theorem \ref{th:UMDF}. The fact that the universal minimal distal and equicontinuous flows of $G$ coincide is a consequence of a general fact in topological dynamics  (see \cite{deVries93}*{IV(6.18-6.19)}): any regular distal minimal flow is equicontinuous whenever it is metrizable. (Recall that $X$ is regular when for every almost periodic point $(x,y)$ in $X^{2}$, there is an endomorphism $\gamma$ of $X$ such that $y=\gamma(x)$.) Here, the universal minimal distal flow is always regular, and it is metrizable because $M(G)$ is.

Next, let $\phi_{B}:G\rightarrow B(G)$ denote the Bohr compactification of $G$. Because it is a minimal $G$-flow, it is a factor of $M(G) = \widehat{G/G^{*}}$ via a map $\pi$. Write $y_{0}$ for $G^{*}$, seen as an element of $M(G)$. It is $G^{*}$-fixed in $M(G)$, so $\pi(y_{0})$ is $G^{*}$-fixed in $B(G)$, and $\pi(y_{0})=\phi_{B}(g^{*})\pi(y_{0})$ for every $g^{*}\in G^{*}$. Therefore, $G^{*}\subseteq\mathrm{Ker} \phi_{B}$, and $(G^{*})^{G} \subseteq \mathrm{Ker} \phi_{B}$. As a result, $\phi_{B}:G\rightarrow B(G)$ induces a continuous morphism $\phi_{B}:G/(G^{*})^{G}\rightarrow B(G)$ with dense image. Notice that since $G^{*}$ is coprecompact in $G$, so is $(G^{*})^{G}$, and the Polish group $G/(G^{*})^{G}$ is in fact compact. Thus, $\phi_{B}$ is surjective, and witnesses that $B(G)$ is a continuous image of $G/(G^{*})^{G}$. As this latter group is a group compactification of $G$, it follows from the definition of the Bohr compactification of $G$ that $B(G)=G/(G^{*})^{G}$. $\qed$

\subsection{Comments on Theorem \ref{th:UMDF}}

\label{subsection:comments} 

Several comments are in order when comparing the statements of Theorem \ref{c:SA1} and Theorem \ref{th:UMDF}. 

First, one may wonder whether $B(G)$ can be shown to be a continuous image of $G$ without assuming $M(G)$ being metrizable. This turns out to be the case: By Ben Yaacov's work \cite{BenYaacov16a}, assuming that $G$ is Roelcke precompact is actually enough for this. The group $G$ is then of the form $\Aut(\m F)$ for some metric $\omega$-categorical Fra\"iss\'e structure $\m F$, and $H$ coincides with the automorphism group of the structure $\m F^{*}$ obtained from $\m F$ by naming all the elements of $\mathrm{acl}(\emptyset)$ in $\m F^{eq}$. Note that when $G$ is non-Archimedean, this is also a consequence of \cite{Tsankov12} or of \cite{NVT17b}. The situation becomes different if we simply assume that $B(G)$ is metrizable, as pointed out kindly by Todor Tsankov: Consider the countable discrete group $G=\mathrm{SL}(3,\mathbb Z)$. It is known to have Kazhdan's property (T) \cite{Kazhdan1967}, which implies that it has a metrizable Bohr compactification \cite{Wang1975}*{Theorem 2.6}. Next, using the natural projections $\mathbb Z \twoheadrightarrow \mathbb Z/n\mathbb Z$, $G$ is residually finite, and as such has an infinite profinite completion, hence an infinite Bohr compactification. Being a compact group, $B(G)$ must therefore be uncountable. Assume now that $B(G)$ is of the form $\widehat{G/H}$. Since $G$ is discrete, so is the uniform structure on the quotient $G/H$ and $B(G)=G/H$ is countable, a contradiction. 

Second, one may ask whether the group $H$ in Theorem \ref{th:UMDF} is minimally almost periodic. This is unclear in general, but holds when $G$ is Roelcke precompact, again in virtue of the results from \cite{BenYaacov16a}.   

Last, let us point out that $G$ may be minimally almost periodic without $M(G)$ being necessarily proximal. For example, gathering results from \cite{Kechris05}, \cite{NVT13a}, \cite{Melleray16} and Section \ref{section:Examples} below, for $\m F=\N$, the random graph, a Henson graph, the random tournament or the rational Urysohn space, $\Aut(\m F)$ is minimally almost periodic,  the universal minimal flow of $\Aut(\m F)$ is the logic on the space of all linear orders on $\m F$, while the proximal universal minimal flow is the logic action on the space of all betweenness relations of $\m F$. For $\m S (2)$, the automorphism group is also minimally almost periodic, the universal minimal flow is the orbit closure of the ``natural'' partition into two halves, and the proximal universal minimal flow is the orbit closure of the corresponding equivalence relation.

\section{Examples of universal minimal distal and equicontinuous flows}

\label{section:Examples}

In this section, we use Theorem \ref{th:UMDF} to calculate the universal minimal equicontinuous flows for the groups $G \leq S_\infty$ that are given as automorphism groups of homogeneous graphs and tournaments. Note that our interest here is really to gather a small catalogue of simple applications of Theorem \ref{cor}, as opposed to prove new results. Indeed, several groups among those considered below are already known to have a simple (in the abstract group-theoretic sense) automorphism group. As a result, the Bohr compactification is trivial. This is so for the random graph by a result of Truss \cite{Truss1985}, and for the Henson graphs and the random tournament, by some unpublished work of Rubin. (The interested reader may consult \cite{Macpherson11} for several specific references.) On the other hand, in the Roelcke precompact case, by the aforementioned result of Ben Yaacov from \cite{BenYaacov16a}, the Bohr compactification can also be obtained by determining $\mathrm{acl}(\emptyset)$ in $\m F^{eq}$, a task which can apparently be carried out without any substantial obstruction in the present case, but may turn out to be difficult in general. For all the arguments that follow, some familiarity with Fra\"iss\'e theory and with \cite{Kechris05} is assumed. 

\subsection{Betweenness relations and minimally almost periodic groups}

\label{subsection:betweenness}

\begin{lemma}
\label{l:B}
  Assume that $\m F$ is a homogeneous structure and that there is an order expansion $\m F^* = (\m F, <)$ so that $M(G) = \widehat{G/G^{*}}$, where $G=\Aut(\m F)$ and $G^{*}=\Aut(\m F^*)$. Assume also that there exist $u, v\in \m F^*$ such that $u < v$ and
for every $x < y \in \m F^*$, there are $x_{0},...,x_{n+1} \in \m F^*$ such that $x_{0}=x$, $x_{n+1}=y$ and $$\exists g_{0}, \ldots, g_n \in G^{*} \ \ \forall i\leq n \quad g_{i}(u)=x_{i} \And g_{i}(v)=x_{i+1}$$

Then $N(G^{*})=\Aut(\m F, B)$, where $B$ is the betweenness relation induced by $<$. 
\end{lemma}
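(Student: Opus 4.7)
The plan is to work in the space $\nLO(\m F)$ of linear orders on $\m F$ equipped with the logic action of $G=\Aut(\m F)$. Under this action, $G^{*}=\Stab_{G}(<)$, so a direct computation gives $\Stab_{G}(g\cdot <)=gG^{*}g^{-1}$. Consequently, $g\in N(G^{*})$ iff $g\cdot <$ is a linear order whose $G$-stabilizer equals $G^{*}$. The lemma therefore reduces to showing that such orders are exactly $<$ and its reverse $<^{\mathrm{op}}$, then matching them with $\Aut(\m F,B)$.

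For the inclusion $\Aut(\m F,B)\subseteq N(G^{*})$, I would invoke the elementary fact that on any set of size at least three, a linear order is determined up to reversal by its induced betweenness relation (a triple-by-triple verification). So if $g$ preserves $B$, then $g\cdot <$ still induces $B$, which forces $g\cdot <\in\{<,<^{\mathrm{op}}\}$. Since $\Aut(\m F,<)=\Aut(\m F,<^{\mathrm{op}})=G^{*}$, one concludes $gG^{*}g^{-1}=\Stab(g\cdot <)=G^{*}$, hence $g\in N(G^{*})$.

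For the reverse inclusion $N(G^{*})\subseteq\Aut(\m F,B)$, the chain hypothesis enters. Given $g\in N(G^{*})$, set $<'=g\cdot <$; then $<'$ is preserved by $G^{*}$. Looking at the pair $(u,v)$, either $u<'v$ or $v<'u$. In the first case, every $h\in G^{*}$ satisfies $h(u)<'h(v)$; applied to the elements $g_{0},\ldots,g_{n}\in G^{*}$ furnished by the hypothesis for a given $x<y$, this yields $x_{i}<'x_{i+1}$ for each $i$, and transitivity of $<'$ gives $x<'y$. As this holds for all $x<y$, one deduces $<'=<$. The second case gives $<'=<^{\mathrm{op}}$ symmetrically. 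Either way, $g$ maps $<$ to $<$ or $<^{\mathrm{op}}$, so $g\in\Aut(\m F,B)$.

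The main obstacle is this second inclusion, whose substance lies in observing that the chain hypothesis provides exactly the ``connectedness'' needed to lift the orientation choice made at $(u,v)$ to a global orientation of $\m F$. Concretely, it ensures that the single $G^{*}$-orbit of $(u,v)$, concatenated finitely many times, covers the entire $<$-relation on $\m F$; without such a hypothesis, one could imagine $<'$ agreeing with $<$ on part of $\m F$ and with $<^{\mathrm{op}}$ elsewhere while still being $G^{*}$-invariant, so the chain condition is what rules this pathology out.
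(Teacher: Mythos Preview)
Your argument is correct. The paper does not actually give a proof of this lemma; it simply refers the reader to \cite{Melleray16}*{Lemma 5.1}, so there is nothing to compare at the level of detail, but your write-up is essentially how that result is established: identify $G^{*}$ with the stabilizer of $<$ in the logic action on linear orders, observe that $g\in N(G^{*})$ iff $G^{*}$ fixes $g\cdot<$, and use the chain hypothesis to show that any $G^{*}$-invariant linear order must equal $<$ or $<^{\mathrm{op}}$.

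One small remark on presentation: in your opening paragraph you phrase the reduction as ``$g\in N(G^{*})$ iff $g\cdot<$ has $G$-stabilizer $G^{*}$, so the lemma reduces to showing such orders are exactly $<$ and $<^{\mathrm{op}}$.'' Strictly speaking, the condition $\Stab_{G}(g\cdot<)=G^{*}$ is a priori weaker than $g\cdot<\in\{<,<^{\mathrm{op}}\}$ (there could in principle be other orders with the same stabilizer), and it is precisely the chain hypothesis that collapses the two. You do this correctly in the body, but the opening sentence slightly overstates the reduction before the work is done. Also, the space you use is $\LO(\m F)$ in the paper's notation; $\nLO$ is defined as $\mathrm{mLO}$ and is not the intended object here.
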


\begin{proof}
See \cite{Melleray16}*{Lemma 5.1}. 
\end{proof}

\begin{lemma}
\label{l:SA}
Assume that $\m F$ is a homogeneous structure and that there is an order Fra\"iss\'e expansion $\m F^* = (\m F, <)$ so that $M(G) = \widehat{G/G^{*}} =\LO(\m F)$, where $G=\Aut(\m F)$ and $G^{*}=\Aut(\m F^*)$. Assume also that there exist $u, v\in \m F^*$ such that $u < v$ and
for every $x < y \in \m F^*$, there are $x_{0},...,x_{n+1} \in \m F^*$ such that $x_{0}=x$, $x_{n+1}=y$ and $$\exists g_{0}, \ldots, g_n \in G^{*} \ \ \forall i\leq n \quad g_{i}(u)=x_{i} \And g_{i}(v)=x_{i+1}$$

Then $\Age(\m F)$ has the strong amalgamation property. 
\end{lemma}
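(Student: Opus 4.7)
The plan is to leverage the hypothesis $M(G) = \widehat{G/G^{*}} = \LO(\m F)$ to identify $\Age(\m F^*)$ with the class of all finite order expansions of structures in $\Age(\m F)$, and then to combine this with the amalgamation property of the Fra\"iss\'e class $\Age(\m F^*)$ through a classical ordering trick. First I would argue that the equality of $G$-flows $M(G) = \LO(\m F)$ forces the $G$-orbit of the base order $<$ to be dense in the space of all linear orders on $\m F$; unpacking this via the homogeneity of $\m F$, one obtains that for every $\m X \in \Age(\m F)$ and every linear order $\prec$ on $\m X$, the expansion $(\m X, \prec)$ lies in $\Age(\m F^*)$.

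With this in hand, given a strong amalgamation problem $\m A \subseteq \m B_1, \m B_2$ in $\Age(\m F)$, I would fix any linear order $\prec_{\m A}$ on $\m A$ and extend it to orderings $\prec_1$ on $\m B_1$ placing every element of $\m B_1 \setminus \m A$ above $\m A$, and $\prec_2$ on $\m B_2$ placing every element of $\m B_2 \setminus \m A$ below $\m A$. Since $\m F^*$ is an order Fra\"iss\'e expansion, $\Age(\m F^*)$ has the amalgamation property; applying it to $(\m A, \prec_{\m A}) \subseteq (\m B_i, \prec_i)$ yields an ordered amalgam $(\m D, \prec) \in \Age(\m F^*)$ with order-preserving embeddings $h_i : (\m B_i, \prec_i) \to (\m D, \prec)$ that agree on $\m A$. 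Under $\prec$, the image $h_1(\m B_1 \setminus \m A)$ sits entirely above $h_1(\m A) = h_2(\m A)$, while $h_2(\m B_2 \setminus \m A)$ sits entirely below; this forces $h_1(\m B_1) \cap h_2(\m B_2) = h_1(\m A)$. Forgetting the order, $\m D$ yields a strong amalgam in $\Age(\m F)$.

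The main technical point is the first step, namely the identification $\Age(\m F^*) = $ all order expansions of $\Age(\m F)$, which rests on the Kechris--Pestov--Todorcevic correspondence between minimal flows of automorphism groups and orbit closures of expansions; once this is secured, the ordering trick is routine. It is worth remarking that the chaining hypothesis in the statement does not appear to enter this argument directly: it is essential for Lemma \ref{l:B}'s identification of $N(G^{*})$, but here the ``all orderings realized'' consequence of $M(G) = \LO(\m F)$ alone already suffices to drive the proof.
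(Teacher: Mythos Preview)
Your argument is correct and follows essentially the same route as the paper: both first use $M(G)=\LO(\m F)$ to identify $\Age(\m F^{*})$ with the class of all linear order expansions of $\Age(\m F)$, and then reduce strong amalgamation for $\Age(\m F)$ to ordinary amalgamation for $\Age(\m F^{*})$ (the paper cites \cite{Kechris05}*{Proposition 5.3} for this reduction, whereas you spell out the standard above/below ordering trick that proves it). The only notable difference is in justifying amalgamation for $\Age(\m F^{*})$: you invoke the Fra\"iss\'e hypothesis on $\m F^{*}$ directly, while the paper instead derives it from the Ramsey property of $\Age(\m F^{*})$, itself obtained from extreme amenability of $G^{*}$; and you are right that the chaining hypothesis plays no role here.
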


\begin{proof}
First, $M(G)=LO(\m F)$ is equivalent to the fact that $\Age(\m F^{*})$ is the class of all those structures $(\m A, <^{\m A})$ where $\m A\in \Age(\m F)$ and $<^{\m A}$ is a linear ordering on $\m A$. Therefore, the strong amalgamation property of $\Age(\m F)$ is equivalent to the amalgamation property of $\Age(\m F^{*})$ (see \cite{Kechris05}*{Proposition 5.3}). Next, extreme amenability of $G^{*}$ implies that $\Age(\m F^{*})$ has the Ramsey property, which in turn implies that $\Age(\m F^{*})$ has the amalgamation property because it is made of rigid elements, and has the hereditary and the joint embedding properties (\cite{Nesetril1977}*{p.294, Lemma 1}). 
\end{proof}

\begin{lemma}
\label{l:disjoint}

Let $\m F$ be a Fra\"iss\'e structure whose age has the strong amalgamation property. Let $h\in \Aut(\m F)$ with finitely many fixed points, and $\m A$ be a finite substructure of $\m F$. Then there exists a copy $\widetilde{\m A}$ of $\m A$ in $\m F$ so that $\widetilde A \cap h(\widetilde A)=\emptyset$. 

\end{lemma}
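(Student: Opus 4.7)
The plan is to construct $\widetilde{\m A}$ one element at a time, using strong amalgamation to guarantee that at each stage there is an infinite pool of candidates from which we can avoid a prescribed finite forbidden set.

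First I would record a standard consequence of SAP for the Fra\"iss\'e limit $\m F$: for any finite $C\subset \m F$ and any $a\in \m F \setminus C$, the orbit of $a$ under the pointwise stabilizer $\Aut(\m F)_{C}$ is infinite. Indeed, iterating strong amalgamation of $\m F|_{C\cup\{a\}}$ with fresh copies of itself over $\m F|_{C}$ produces, for each $N$, a finite structure in $\Age(\m F)$ that contains $C$ together with $N$ pairwise distinct realizations of $\tp(a/C)$. Universality of $\m F$ embeds this structure into $\m F$, and homogeneity then conjugates the embedded copy of $C$ back to $C$, yielding $N$ distinct elements of $\m F\setminus C$ in the $\Aut(\m F)_{C}$-orbit of $a$. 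Since $N$ is arbitrary, the orbit is infinite.

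Now enumerate $\m A = \{a_{1},\ldots, a_{n}\}$ and build $\tilde a_{1},\ldots,\tilde a_{n}\in \m F$ by induction, maintaining that $a_{i}\mapsto \tilde a_{i}$ is an isomorphism of substructures and that $\{\tilde a_{i} : i\leq k\}\cap h(\{\tilde a_{i} : i\leq k\})=\emptyset$. Having chosen $\tilde a_{1},\ldots,\tilde a_{k-1}$, homogeneity of $\m F$ provides an automorphism sending each $a_{i}$ to $\tilde a_{i}$ for $i<k$; let $b_{0}$ be the image of $a_{k}$ under this automorphism. Then $b_{0}$ realizes the correct type over $\{\tilde a_{1},\ldots,\tilde a_{k-1}\}$, and by the previous paragraph its $\Aut(\m F)_{\{\tilde a_{1},\ldots,\tilde a_{k-1}\}}$-orbit is infinite. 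Inside this orbit, pick $\tilde a_{k}$ avoiding the finite forbidden set
\[
\{\tilde a_{i} : i<k\} \cup \{h(\tilde a_{i}) : i<k\} \cup \{h^{-1}(\tilde a_{i}) : i<k\} \cup \mathrm{Fix}(h),
\]
which is possible since this set is finite (using that $\mathrm{Fix}(h)$ is finite by hypothesis). The resulting $\tilde a_{k}$ satisfies all required constraints, completing the induction.

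The only real subtlety is the first step — extracting the infinite-orbit consequence from SAP — and this is a classical fact. After that, the construction is a straightforward one-point-at-a-time argument in which only finitely many points are excluded at each stage, so the infinite pool always suffices.
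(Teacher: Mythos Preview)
Your argument is correct and is essentially the same as the paper's: both proceed by one-point-at-a-time induction on $|\m A|$, using the strong amalgamation property to guarantee infinitely many realizations of the required type over the already-chosen points, and then avoiding the finite forbidden set $\{h(\tilde a_i):i<k\}\cup\{h^{-1}(\tilde a_i):i<k\}\cup\mathrm{Fix}(h)$. The only difference is cosmetic: you isolate the ``infinite orbit under the pointwise stabilizer'' consequence of SAP as a separate preliminary claim, whereas the paper invokes it inline at the induction step.
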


\begin{proof}
We proceed by induction on $|\m A|$. The case $|\m A|=1$ is handled thanks to the finiteness of the set of $h$-fixed points, and to the fact that every $1$-point substructure of $\m F$ has infinitely many copies in $\m F$ (thanks to the strong amalgamation property). For the induction step, assume that $|A|=n+1$. Take an enumeration $\{ a_{1},\ldots,a_{n+1}\}$ of $\m A$ and consider $\m A'$ the substructure supported by $\{ a_{1},\ldots,a_{n}\}$. By induction hypothesis, we can find a copy $\widetilde{\m A'}=\{ \tilde a_{1},\ldots,\tilde a_{n}\}$ of $\m A'$ in $\m F$ so that $\widetilde{\m A'}\cap h(\widetilde{\m A'})=\emptyset$. Thanks to the hypotheses on $h$ and of strong amalgamation, we can find $x\in \m F$ so that $h(x)\notin\{ x\}\cup h(\widetilde{A'})\cup h^{-1}(\widetilde{A'})$ and $\widetilde{\m A'}\cup\{x\}\cong \m A$ via $a_{i}\mapsto \tilde a_{i}$ and $a_{n+1}\mapsto x$. Then $\widetilde{\m A}:=\widetilde{\m A'}\cup\{x\}$ is as required. \end{proof}

\begin{lemma}
\label{l:pr}
Suppose that $\m F$ and $\m F^* = (\m F, <)$ are Fra\"iss\'e structures that satisfy the hypothesis of Lemma~\ref{l:SA}. Let $\m A_{0}$ and $\m A_{1}$ be finite disjoint isomorphic substructures of $\m F$. Then there exists $k\in G$ that preserves $<$ on $A_{0}$ and reverses it on $A_{1}$.  
\end{lemma}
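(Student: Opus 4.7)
The plan is to produce $k$ by first constructing a finite partial isomorphism of $\m F$ with the prescribed order-preserving/reversing behavior on $A_{0}$ and $A_{1}$, and then extending it to a full automorphism via ultrahomogeneity of $\m F$.

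Concretely, set $\m A := A_{0}\cup A_{1}$, regarded as a substructure of $\m F$. I will define a linear ordering $\prec$ on $\m A$ that agrees with $<$ on $A_{0}$, reverses $<$ on $A_{1}$, and interleaves $A_{0}$ with $A_{1}$ in any convenient fashion (the disjointness hypothesis is exactly what makes $\prec$ well-defined). The key observation, already recorded in the proof of Lemma~\ref{l:SA}, is that the assumption $M(G)=\LO(\m F)$ is equivalent to saying that $\Age(\m F^{*})$ consists of \emph{all} ordered versions of members of $\Age(\m F)$. In particular $(\m A,\prec)\in \Age(\m F^{*})$, so there is an embedding $\iota\colon (\m A,\prec)\hookrightarrow \m F^{*}$.

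Forgetting the order, $\iota$ is a finite partial isomorphism of $\m F$, so by ultrahomogeneity it extends to some $k\in \Aut(\m F)=G$. Since $\iota$ transports $\prec$ to the restriction of $<$ on its image, and $\prec$ coincides with $<$ on $A_{0}$ while being opposite to $<$ on $A_{1}$, the automorphism $k$ preserves $<$ on $A_{0}$ and reverses it on $A_{1}$, as required.

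I do not expect any serious obstacle: the combinatorial content lies entirely in the fact that $M(G)=\LO(\m F)$ grants total freedom to prescribe the order on any finite substructure of $\m F$. Neither the connectivity hypothesis of Lemma~\ref{l:SA} nor strong amalgamation are invoked here beyond what is already packaged into the statement $M(G)=\LO(\m F)$.
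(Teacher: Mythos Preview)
Your proposal is correct and follows essentially the same route as the paper's own proof: define a new linear ordering on $A_{0}\cup A_{1}$ that keeps $<$ on $A_{0}$ and reverses it on $A_{1}$, use $M(G)=\LO(\m F)$ to realize this ordered structure inside $\m F^{*}$, and then extend the resulting partial isomorphism of $\m F$ by ultrahomogeneity. The only cosmetic difference is that the paper fixes the cross-ordering by declaring $A_{0}<^{\m B}A_{1}$, whereas you allow an arbitrary interleaving---either choice works.
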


\begin{proof}
Consider the substructure $\m B$ of $\m F$ supported by  $A_{0}\cup A_{1}$, together with the ordering $<$ that $\m F^{*}$ induces on it. Define on $\m B$ a new linear ordering $<^{\m B}$ as follows: first, declare $A_{0}<^{\m B} A_{1}$. Next, keep $<$ on $A_{0}$, but reverse it on $A_{1}$. The resulting structure $(\m B, <^{\m B})$ is in $\Age(\m F^{*})$ so it has a copy $\widetilde{\m B}$ in $\m F^{*}$. Furthermore, the identity map from $(\m B, <)$ to $(\m B, <^{\m B})$ is an isomorphism between elements of $\Age(\m F)$. As such, it induces an isomorphism from $\m B$ to $\widetilde{\m B}$ which is order-preserving on $A_{0}$, order-reversing on $A_{1}$, and can be extended to an element $k$ of $G$.
\end{proof}

\begin{prop}
\label{p:map}
Suppose that $\m F$ and $\m F^* = (\m F, <)$ are Fra\"iss\'e structures that satisfy the hypothesis of Lemma~\ref{l:SA}. Then $B(G)$ is trivial. 
\end{prop}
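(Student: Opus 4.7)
By Theorem~\ref{th:UMDF}, $B(G)=G/(G^{*})^{G}$, so the proposition reduces to showing that $(G^{*})^{G}=G$. Since $(G^{*})^{G}$ is closed in $G$ for the topology of pointwise convergence on $\m F$, it is enough to prove that for every $g\in G$ and every finite substructure $\m A$ of $\m F$ there exists $h\in (G^{*})^{G}$ with $h|_{A}=g|_{A}$.

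The plan is to first settle the ``disjoint support'' case in which $A\cap g(A)=\emptyset$, and then reduce the general case to it by interposing a well-chosen copy of $\m A$. For the disjoint case, recall that $M(G)=\LO(\m F)$ is equivalent to $\Age(\m F^{*})$ being the class of all linearly ordered structures whose $\m F$-reducts lie in $\Age(\m F)$ (as noted in the proof of Lemma~\ref{l:SA}). This will let me prescribe any linear order $\prec_{0}$ on $A$, transport it along $g$ to a linear order on $g(A)$, and interleave the two into a single linear order $\prec$ on $S=A\cup g(A)$ (for instance, placing $A$ entirely below $g(A)$) for which $g:A\to g(A)$ is, by construction, order-preserving. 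Since $(\m S,\prec)\in \Age(\m F^{*})$, it embeds into $\m F^{*}$, and by homogeneity of $\m F$ this embedding extends to some $k^{-1}\in G$; in particular, $k\cdot <$ restricts to $\prec$ on $S$. The partial map $k^{-1}(a)\mapsto k^{-1}(g(a))$ (for $a\in A$) is then a partial automorphism of $\m F^{*}$: it preserves the $\m F$-structure since $k,g\in G$, and preserves $<$ by the choice of $\prec$. By homogeneity of $\m F^{*}$ it extends to some $f\in G^{*}$, and the conjugate $kfk^{-1}\in (G^{*})^{G}$ satisfies $kfk^{-1}(a)=g(a)$ for every $a\in A$.

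For an arbitrary $g\in G$ and finite $\m A\subseteq \m F$, I would then use the strong amalgamation of $\Age(\m F)$ granted by Lemma~\ref{l:SA} to find a copy $\widetilde{\m A}$ of $\m A$ in $\m F$ disjoint from $A\cup g(A)$, and pick $\pi\in G$ extending the canonical isomorphism $\m A\to \widetilde{\m A}$. Since $A\cap \pi(A)=A\cap \widetilde A=\emptyset$ and $\widetilde A\cap g\pi^{-1}(\widetilde A)=\widetilde A\cap g(A)=\emptyset$, two applications of the disjoint case produce $h_{2},h_{1}\in (G^{*})^{G}$ with $h_{2}|_{A}=\pi|_{A}$ and $h_{1}|_{\widetilde A}=g\pi^{-1}|_{\widetilde A}$; then $h_{1}h_{2}\in (G^{*})^{G}$ and $(h_{1}h_{2})(a)=g(a)$ for every $a\in A$, which completes the reduction. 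The main obstacle in the argument is exactly the possible overlap $A\cap g(A)\neq\emptyset$, which would create conflicting constraints on any order $\prec$ one tries to prescribe on $A\cup g(A)$ in order to make $g$ order-preserving; the detour through $\widetilde A$, made possible by strong amalgamation, is what bypasses this issue and allows the general case to be settled by a single disjoint-support application.
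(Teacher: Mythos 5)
Your proof is correct, and it takes a genuinely different route from the paper's. The paper does not prove density of the normal closure directly: it first identifies $N(G^{*})$ as the closed subgroup generated by $G^{*}$ together with a single order-reversing $\sigma$ (Lemma \ref{l:B} --- this is where the hypothesis on $u,v$ is really used), then shows $\sigma\in(G^{*})^{G}$ by producing, for each finite $A$, a conjugate $g\sigma g^{-1}$ which is order-preserving on $A$ (via Lemmas \ref{l:disjoint} and \ref{l:pr}), and finally concludes dynamically: $B(G)=G/(G^{*})^{G}$ is an equicontinuous factor of $G/N(G^{*})=\Pi(G)$ by Theorem \ref{c:SA1}, and a flow that is both equicontinuous and proximal is a point. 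You instead verify $(G^{*})^{G}=G$ head-on: in the disjoint-support case a single conjugate $kfk^{-1}$ of an element of $G^{*}$ already agrees with $g$ on $A$, because the hypothesis $M(G)=\LO(\m F)$ (equivalently, $\Age(\m F^{*})$ consists of all linearly ordered members of $\Age(\m F)$) lets you realize, via an automorphism of $\m F$, any prescribed linear order on $A\cup g(A)$ --- in particular one making $g|_{A}$ order-preserving --- so that $k^{-1}gk$ restricted to $k^{-1}(A)$ is a partial automorphism of $\m F^{*}$ and extends to some $f\in G^{*}$ by ultrahomogeneity of $\m F^{*}$; the general case then follows by factoring $g=(g\pi^{-1})\circ\pi$ through a copy $\widetilde{\m A}$ disjoint from $A\cup g(A)$, which strong amalgamation provides. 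Both of your disjointness requirements ($A\cap\pi(A)=\emptyset$ and $\widetilde A\cap g\pi^{-1}(\widetilde A)=\widetilde A\cap g(A)=\emptyset$) do hold, so the reduction is sound. Your approach is more elementary and self-contained: it needs no normalizer computation and no appeal to Theorem \ref{c:SA1} or to proximal flows, and it uses the connectedness condition on $u,v$ only through the conclusion of Lemma \ref{l:SA} (strong amalgamation), so it in fact establishes the proposition under slightly weaker hypotheses. What the paper's route provides in exchange is the explicit identification of $N(G^{*})$ and the comparison with $\Pi(G)$, a scheme that is then reused for $\m S(2)$ in the following subsection.
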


\begin{proof}
Consider $B$ the betweenness relation on $\m F$ induced by $<$. Then $\Aut(\m F,B)$ is the closed subgroup of $G$ generated by $G^{*}$ and any $\sigma\in G$, which we fix from now on, that reverses the ordering. From Lemma \ref{l:B}, this is also the normalizer of $G^{*}$ in $G$. We show that this subgroup is contained in $(G^{*})^{G}$ by showing that $\sigma \in (G^{*})^{G}$. This will suffice to show that $B(G)$ is trivial, because $B(G)=G/(G^{*})^{G}$ will be an equicontinuous factor of $G/N(G^{*})$, which is the universal minimal proximal flow of $G$ by Theorem \ref{c:SA1}. 

To show that $\sigma \in (G^{*})^{G}$, consider $A\subset \m F$ finite and $\m A$ the substructure of $\m F$ supported by $A$. By Lemma \ref{l:SA}, the age of $\m F$ has the strong amalgamation property. Moreover, $\sigma$ has at most one fixed point as it reverses the ordering, so Lemma \ref{l:disjoint} applies and we can find $\widetilde{\m A} \cong \m A$ in $\m F$ so that $\widetilde A\cap \sigma(\widetilde A)=\emptyset$. Applying Lemma \ref{l:pr}, there is $k\in G$ which is order-preserving on $\widetilde A$ and order-reversing on $\sigma(\widetilde A)$. Because $k(\sigma(\widetilde{\m A}))$ and $\m A$ are isomorphic as substructures of $\m F^{*}$, there is $j\in G^{*}$ sending $k(\sigma(\widetilde{\m A}))$ on $\m A$. Set $g = j\circ k$. It is order-preserving from $\widetilde{\m A}$ to $\m A$ and order-reversing on $\sigma(\widetilde{\m A})$. Therefore, the restriction of $g\sigma g^{-1}$ to $A$ is order-preserving.  \end{proof}

Proposition \ref{p:map} allows to capture at once many structures, such as the structure in the empty language, the random graph, all Henson graphs, the random tournament, and the rational Urysohn space (note that the automorphism group of this latter object is not Roelcke-precompact).

\subsection{Homogeneous graphs}

We already computed $B(G)$ in the case of the automorphism groups of the infinite complete graph $K_{\N}$, the Henson graphs and the random graph. According to the Lachlan-Woodrow classification \cite{Lachlan80}, the remaining cases of countable homogeneous graphs are, up to a switch of the edges and the non-edges: 

\begin{enumerate}
\item \label{i:1} $I_{n}[K_{\N}]$, made of $n$ many disjoint copies of $K_{\N}$, where $n\in \N$ is fixed;
\item \label{i:2} $I_{\N}[K_{n}]$, made of infinitely many disjoint copies of $K_{n}$, where $n\in \N$ is fixed;
\item \label{i:3} $I_{\N}[K_{\N}]$, made of infinitely many disjoint copies of $K_{\N}$.
\end{enumerate}

To deal with those, we will use that the relevant groups $G^{*}$ have been described in \cite{Kechris05}, and that the normal closure of $\Aut(\Q,<)$ in $S_{\infty}$ is $S_{\infty}$ itself. 
 
\subsubsection{$I_{n}[K_{\N}]$}
Recall that $G=S_{n}\ltimes S_{\infty}^{n}$ and that $G^{*}=\{e\}\times\Aut(\Q,<)^{n}$. 

Working independently in each part, every element of the normal subgroup $\{ e\} \times S_{\infty}^{n}$ is in $(G^{*})^{G}$. Therefore, $(G^{*})^{G}=\{e\}\times S_{\infty}^{n}=\Aut(I_{n}[K_{\N}], (A^{*}_{i})_{i\in [n]})$ and $B(G)=S_{n}$.  

\subsubsection{$I_{\N}[K_{n}]$}

$G=S_{\infty}\ltimes S_{n} ^{\N}$ and $G^{*}=\Aut(\Q, <)\times \{e\}$. 

First, notice that $S_{\infty}\times \{ e\}\subseteq (G^{*})^{G}$. From this, it is easy to prove that $(G^{*})^{G} = G$, so $B(G)$ is trivial.

\subsubsection{$I_{\N}[K_{\N}]$}

$G=S_{\infty}\ltimes S_{\infty}^\Q$ and $G^{*}=\Aut(\Q, <) \ltimes \Aut(\Q,<)^\Q$. Stabilizing each part setwise, we obtain $\{ e\}\times S_{\infty}^{\Q} \subseteq (G^{*})^{G}$. From this, as before, it is easy to prove that $(G^{*})^{G} = G$ and that $B(G)$ is trivial. 

\subsection{Homogeneous tournaments}

By Lachlan's classification \cite{Lachlan84}, the three countable homogeneous tournaments are $(\Q,<)$, the random tournament, and the dense local order $\m S(2)$. In the first case, the automorphism group is known to be extremely amenable, while the second case follows from the results of Section \ref{subsection:betweenness}. Therefore, the only remaining case to treat is $\m S(2)$. This will be done with the same scheme as for Proposition \ref{p:map}. In what follows, we write $G$ for $\Aut(\m S(2))$. For this structure, it was shown \cite{NVT13a} that $M(G) = \widehat{G/G^*}$, where $G^{*} = \Aut(\m S(2), P^{*} _{0}, P^{*} _{1}) \leq G$ and $P^{*} _{0}, P^{*} _{1}$ is the partition of $\m S(2)$ into right part and left part. Let $E^{*}$ denote the equivalence relation induced by the partition $(P^{*} _{0}, P^{*} _{1})$. We will make use of the following known fact: the structure $(\m S(2), P^{*} _{0}, P^{*} _{1})$ is simply bi-definable with $\Q_{2}=(\Q, <, Q_{0}, Q_{1})$, where both $Q_{0}$ and $Q_{1}$ are dense. To see this, view $(\Q,<)$ as a directed graph where $x\longleftarrow y$ iff $x<y$, and observe that $(\m S(2), P^{*} _{0}, P^{*} _{1})$ is obtained from $(\Q, <, Q_{0}, Q_{1})$ by reversing the edges that are between vertices belonging to different parts. In what follows, we will make use of the ordering $<$ as a relation in $\m S(2)^{*}$ without any further indication.

\begin{lemma}
\label{l:N(Aut(S(2)))}
$N(G^{*})=\Aut(\m S(2), E^{*})$.
\end{lemma}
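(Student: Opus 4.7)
The plan is to establish both inclusions directly. For the containment $\Aut(\m S(2),E^{*}) \subseteq N(G^{*})$, I would split into cases according to whether an element $g \in \Aut(\m S(2),E^{*})$ preserves the partition $(P^{*}_{0},P^{*}_{1})$ setwise or swaps the two parts. In the first case, $g$ lies in $G^{*}$ by definition, so certainly in its normalizer. In the second case, if $h \in G^{*}$ and $x \in P^{*}_{0}$, then $g^{-1}(x) \in P^{*}_{1}$, hence $h(g^{-1}(x)) \in P^{*}_{1}$ (since $h$ preserves each part), and therefore $ghg^{-1}(x) \in P^{*}_{0}$; the analogous computation on $P^{*}_{1}$ gives $ghg^{-1} \in G^{*}$, and applying the same argument to $g^{-1}$ yields $gG^{*}g^{-1}=G^{*}$.

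For the reverse inclusion $N(G^{*}) \subseteq \Aut(\m S(2),E^{*})$, I would rely on the general principle that the normalizer of a subgroup permutes its orbits in any action: if $g$ normalizes $G^{*}$ and $x\in \m S(2)$, then $g \cdot (G^{*}\cdot x) = (gG^{*}g^{-1})\cdot g(x) = G^{*} \cdot g(x)$. It therefore suffices to identify the $G^{*}$-orbits on $\m S(2)$ as exactly $P^{*}_{0}$ and $P^{*}_{1}$. These orbits are contained in the parts since $G^{*}$ preserves $P^{*}_{0}$ and $P^{*}_{1}$ by definition; for transitivity within each part, one invokes the homogeneity of the expansion $(\m S(2),P^{*}_{0},P^{*}_{1})$, which via the bi-definability noted earlier with $\Q_{2}=(\Q,<,Q_{0},Q_{1})$ is a standard Fra\"iss\'e limit whose automorphism group is transitive on each unary predicate. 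Once the orbits are pinned down, any $g \in N(G^{*})$ must either fix the pair $\{P^{*}_{0},P^{*}_{1}\}$ pointwise or swap its elements, and in either case preserves the equivalence relation $E^{*}$.

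Conceptually this mirrors Lemma \ref{l:B}, with the partition $(P^{*}_{0},P^{*}_{1})$ playing the role of $<$ and the equivalence relation $E^{*}$ playing the role of the betweenness relation $B$: the normalizer consists exactly of those automorphisms that preserve the ``unlabelled'' structure recording which pairs lie in the same part, without remembering which part is which. I do not anticipate a genuine obstacle here; the only point requiring care is making the $G^{*}$-orbit computation explicit (i.e.\ checking homogeneity of the expansion on one-point structures), which is immediate from the standard description of $\Q_{2}$.
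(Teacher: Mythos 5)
Your proposal is correct and follows essentially the same route as the paper: the paper's argument for $N(G^{*})\subseteq\Aut(\m S(2),E^{*})$ is exactly the element-wise form of your ``normalizers permute orbits'' computation (take $x\mathrelax E^{*}y$, choose $g^{*}\in G^{*}$ with $g^{*}(x)=y$, and observe $g(y)=gg^{*}g^{-1}(g(x))$ stays in the same part as $g(x)$), relying on the same transitivity of $G^{*}$ on each part that you justify via homogeneity of $\Q_{2}$. The paper leaves the reverse inclusion as ``easy,'' which is precisely the case analysis you spell out.
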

\begin{proof}
Let $x E^{*} y \in \m S(2)$. Let $g\in N(G^{*})$ and $g^{*}\in G^{*}$ so that $g^{*}(x)=y$. Fix $j \in \{0, 1\}$ such that $g(x)\in P^{*} _{j}$. Then because $gg^{*}g^{-1}\in G^{*}$, we have $gg^{*}g^{-1}(g(x))\in P^{*} _{j}$, i.e., $g(y)\in P^{*} _{j}$. In other words, $g(x) E^{*} g(y)$. So $N(G^{*})\subseteq\Aut(\m S(2), E^{*})$. The other inclusion is easy. 
\end{proof}

\begin{lemma}
\label{l:<}
Let $\sigma \in \Aut(\m S(2),E^{*})\smallsetminus \Aut(\m S(2), P^{*} _{0}, P^{*} _{1})$. Let $A$ be a finite subset of $\m S(2)$ and let $\m A$ (resp. $\m A^{*}$) be the finite substructure that it supports in $\m S(2)$ (resp. $\m S(2)^{*}$). Then there exists a copy $\widetilde{\m A^{*}}$ of $\m A^{*}$ in $\m S(2)^{*}$ so that $\widetilde{\m A^{*}} < \sigma(\widetilde{\m A^{*}})$ or $\sigma(\widetilde{\m A^{*}})<\widetilde{\m A^{*}}$. 
\end{lemma}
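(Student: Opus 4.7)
The first step is to identify $\sigma$ as an order-preserving bijection of $\m S(2)^{*}$. The key observation is that the linear order $<$ on $\m S(2)^{*}$ is already definable from the tournament together with $E^{*}$ alone: inspecting the description of $\m S(2)$ as being obtained from $(\Q,<,Q_{0},Q_{1})$ by reversing the cross-part edges, one checks that for distinct $x,y$, the relation $x<y$ is characterized by the direction of the tournament edge between $x$ and $y$ together with the truth value of $x E^{*} y$ (in two of the four possible combinations one has $x<y$, in the other two $y<x$). Consequently every element of $\Aut(\m S(2),E^{*})$ preserves $<$. Moreover, preservation of $E^{*}$ combined with $\sigma\notin\Aut(\m S(2),P^{*}_{0},P^{*}_{1})$ forces $\sigma$ to swap $P^{*}_{0}$ and $P^{*}_{1}$, so $\sigma$ has no fixed points.

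Given this, I would reduce the lemma to a short density argument. Pick any $a\in\m S(2)$. Since $\sigma(a)\neq a$, either $\sigma(a)>a$ or $\sigma(a)<a$; I treat the first case (the second yields the symmetric conclusion $\sigma(\widetilde{\m A^{*}})<\widetilde{\m A^{*}}$). Using density of $<$, choose $b$ with $a<b<\sigma(a)$. The plan is then to locate $\widetilde{\m A^{*}}$ entirely inside the open interval $(a,b)$; order-preservation of $\sigma$ together with $\min\widetilde{A^{*}}>a$ will then force $\min\sigma(\widetilde{A^{*}})=\sigma(\min\widetilde{A^{*}})>\sigma(a)>b>\max\widetilde{A^{*}}$, which is exactly the desired $\widetilde{\m A^{*}}<\sigma(\widetilde{\m A^{*}})$.

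The existence of such a copy inside $(a,b)$ follows from observing that the induced structure on $(a,b)$ is again a countable dense linear order without endpoints in which $Q_{0}\cap(a,b)$ and $Q_{1}\cap(a,b)$ remain dense; by the uniqueness of the Fra\"iss\'e limit (equivalently, by a Cantor-style back-and-forth) this interval is isomorphic to $\Q_{2}$, hence to $\m S(2)^{*}$. Therefore every finite substructure type of $\m S(2)^{*}$, in particular $\m A^{*}$ itself, embeds into $(a,b)$.

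The main conceptual point is the first step: recognizing that $<$ is already first-order definable from $(\m S(2),E^{*})$, so that an arbitrary element of $\Aut(\m S(2),E^{*})$ is automatically order-preserving. Once this is in hand, no amalgamation argument is required; the rest is a routine interval construction using density and the uniqueness of $\Q_{2}$ as a Fra\"iss\'e limit.
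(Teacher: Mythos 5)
Your proof is correct, but it takes a genuinely different route from the paper's. Both arguments rest on the same two preliminary observations: that $<$ is quantifier-free definable from the tournament together with $E^{*}$ (so that every element of $\Aut(\m S(2),E^{*})$ is automatically order-preserving, and $\sigma$, swapping $P^{*}_{0}$ and $P^{*}_{1}$, is fixed-point free), and that $P^{*}_{0}$ and $P^{*}_{1}$ are dense. The paper then proceeds by induction on $|\m A^{*}|$, in parallel with its Lemma~\ref{l:disjoint}: it builds the copy one point at a time, using density to insert a new point $x$ with $\tilde a_{n}<x<\sigma(\tilde a_{1})$ (or the symmetric choice) and order-preservation of $\sigma$ to check that the enlarged copy still lies entirely below (or above) its $\sigma$-image. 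You instead give a one-shot argument: pick $a$ with, say, $a<\sigma(a)$, interpose $b$ with $a<b<\sigma(a)$, and embed all of $\m A^{*}$ into the interval $(a,b)$, which carries a structure isomorphic to $\Q_{2}$ (indeed, all that is needed is that both parts are dense in the interval, so even the back-and-forth can be dispensed with); order-preservation then pushes $\sigma(\widetilde{\m A^{*}})$ entirely above $b$. Your version avoids the induction and is shorter; it also has the merit of making explicit the definability argument behind the assertion ``$\sigma$ is order-preserving,'' which the paper uses without justification. The paper's inductive scheme, on the other hand, is the one that generalizes to settings without a definable linear order (compare Lemma~\ref{l:disjoint}, where only strong amalgamation is available).
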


\begin{proof}
The proof is similar to that of Lemma \ref{l:disjoint} and is by induction on $|\m A^{*}|$. The base case $|\m A^{*}|=1$ is trivial as $\sigma$ has no fixed point. For the induction step, assume that $|\m A^{*}|=n+1$. Take an increasing enumeration $\{ a_{1},\ldots,a_{n+1}\}$ of $\m A^{*}$ and consider $\m A'$ the substructure of $\m A^{*}$ supported by $\{ a_{1},\ldots,a_{n}\}$. By induction hypothesis, we can find a copy $\widetilde{\m A'}=\{ \tilde a_{1},\ldots,\tilde a_{n}\}$ of $\m A'$ in $\m S(2)^{*}$ so that $\widetilde{\m A'}< \sigma(\widetilde{\m A'})$ or $\sigma(\widetilde{\m A'})< \widetilde{\m A'}$. In the first case, find $x\in \m S(2)^{*}$ so that $\tilde a_{n}<x<\sigma(\tilde a_{1})$ and $\widetilde{\m A'}\cup\{x\}\cong \m A$ via $a_{i}\mapsto \tilde a_{i}$ and $a_{n+1}\mapsto x$. This is possible because both $P^{*}_{0}$ and $P^{*}_{1}$ are dense. Then, because $\sigma$ is order-preserving, we have $\sigma(\tilde a_{n})<\sigma(x)$ and $\widetilde{\m A}:=\widetilde{\m A'}\cup\{x\}$ is as required. In the second case, choose $y$ so that $\sigma(\tilde a_{n})<y<\tilde a_{1}$ and $\sigma(\widetilde{\m A'})\cup\{y\}\cong \m A$ via $a_{i}\mapsto \sigma(\tilde a_{i})$ and $a_{n+1}\mapsto y$. Then, $\tilde a_{n}<\sigma^{-1}(y)$ because $\sigma^{-1}$ is order-preserving, and $\widetilde{\m A}:=\widetilde {\m A'}\cup\{\sigma^{-1}(y)\}$ is as required.
\end{proof}

\begin{lemma}
\label{l:pr'}
Let $\m A_{0}$ and $\m A_{1}$ be finite disjoint isomorphic substructures of $\m S(2)$ so that in $\m S(2)^{*}$, $\m A_{0}<\m A_{1}$ or $\m A_{1}<\m A_{0}$. Then there exists $k\in \Aut(S(2))$ that preserves $P_{0}^{*}$ and $P_{1}^{*}$ on $\m A_{0}$ and permutes them on $\m A_{1}$.
\end{lemma}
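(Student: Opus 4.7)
The plan is to exploit the bi-definability between $(\m S(2),P^{*}_{0},P^{*}_{1})$ and $\Q_{2}=(\Q,<,Q_{0},Q_{1})$ recalled just before Lemma~\ref{l:N(Aut(S(2)))}, so that constructing $k$ reduces to finding a suitable copy of an auxiliary structure inside $\m S(2)^{*}$, followed by an appeal to homogeneity of $\m S(2)$. WLOG assume $\m A_{0}<\m A_{1}$ in $\m S(2)^{*}$. Set $B=A_{0}\cup A_{1}$, viewed as a finite substructure of $\m S(2)^{*}\cong \Q_{2}$, with its induced linear order $<$ and partition $(Q_{0}\cap B, Q_{1}\cap B)$.

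On the same underlying set $B$, I would define a new $\Q_{2}$-like structure $\m B'$ by (a) keeping the $Q$-partition on $A_{0}$ but swapping the two $Q$-labels on $A_{1}$, and (b) keeping $<$ within each of $A_{0}$ and $A_{1}$ but reversing $<$ between them, so that $A_{1}<'A_{0}$ in $\m B'$. The key computation is then to check that the tournament that $\m B'$ induces in $\m S(2)$ (via the bi-definability recipe: edge agrees with $<'$ on same-part pairs, reverses $<'$ on different-part pairs) is \emph{identical} to the tournament that $\m B$ inherits from $\m S(2)$. This is a short case analysis on whether the two points lie both in $A_{0}$, both in $A_{1}$, or one in each: within each $A_{i}$, both the partition relation and the order are unchanged (swapping two labels does not affect the same-part/different-part predicate), so the tournament is trivially preserved; across $A_{0}$ and $A_{1}$, the partition relation is flipped and the order is also flipped, and the two flips cancel. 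This bookkeeping is the main (and really only) obstacle.

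Once $\m B'$ is defined, it is a finite linearly ordered structure with a partition into two parts, hence lies in the age of $\Q_{2}$, which is all such structures. By ultrahomogeneity (and density of both $Q_{0}$ and $Q_{1}$) I would embed $\m B'$ as some $\widetilde{\m B'}\subseteq \m S(2)^{*}$, obtaining a bijection $f\colon B\to \widetilde{\m B'}$ which is an isomorphism of $\Q_{2}$-substructures. By the preceding paragraph, $f$ is also an isomorphism of the induced subtournaments of $\m S(2)$, so homogeneity of $\m S(2)$ extends $f$ to some $k\in \Aut(\m S(2))$.

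Finally I verify that $k$ has the required action on the partition. On $A_{0}$ the $Q$-labels in $\m B'$ coincide with those in $\m B$, so $f$ (hence $k$) sends each $P^{*}_{i}$-point of $A_{0}$ into $P^{*}_{i}$; on $A_{1}$ the labels in $\m B'$ are swapped, so $f$ (hence $k$) sends each $P^{*}_{i}$-point of $A_{1}$ into $P^{*}_{1-i}$. This is exactly the statement that $k$ preserves the partition on $\m A_{0}$ and permutes it on $\m A_{1}$, completing the argument in parallel with Lemma~\ref{l:pr}.
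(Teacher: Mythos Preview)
Your proposal is correct and follows essentially the same approach as the paper: define on $B=A_{0}\cup A_{1}$ a new $\m S(2)^{*}$-expansion by keeping the partition on $A_{0}$, swapping it on $A_{1}$, and reversing the order between the two blocks; verify via the same case analysis that the underlying tournament is unchanged; then embed into $\m S(2)^{*}$ and extend by homogeneity of $\m S(2)$. Your write-up in fact spells out the cross-block cancellation (partition flip and order flip cancel) a bit more explicitly than the paper does.
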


\begin{proof}
Consider the substructure $\m B$ of $\m S(2)$ supported by $\m A_{0}\cup\m A_{1}$. As a substructure of $\m S(2)^{*}$, it inherits a partition into two parts $P_{0}^{*}$ and $P_{1}^{*}$, and a linear ordering $<$. Define on $\m B$ a new partition with parts $P_{0}^{\m B}, P_{1}^{\m B}$ and a new linear ordering $<^{\m B}$ as follows: set $P_{0}^{\m B}=P_{0}^{*}$ and $P_{1}^{\m B}=P_{1}^{*}$ on $A_{0}$, but set $P_{0}^{\m B}=P_{1}^{*}$ and $P_{1}^{\m B}=P_{0}^{*}$ on $A_{1}$. As for $<^{\m B}$, if $x<y$, set $x<^{\m B}y$ if $x, y$ are both in $A_{0}$ or $A_{1}$; otherwise, set $y<^{\m B}x$. This is still a linear ordering on $\m B$ because $\m A_{0}<\m A_{1}$ or $\m A_{1}<\m A_{0}$. The directed graph constructed from $(B, P_{0}^{\m B}, P_{1}^{\m B}, <^{\m B})$ by reversing the arcs between elements of different parts is still $\m B$: the arcs supported by $A_{0}$ or $A_{1}$ are not affected by the change of label of the parts, and the arcs between these two sets are preserved because the original ordering has been reversed. In other words, $(\m B, P_{0}^{\m B}, P_{1}^{\m B}, <^{\m B})$ is still an expansion of $\m B$ in $\m S(2)^{*}$. As such, it has a copy $\widetilde{\m B}$ in $\m S(2)^{*}$. Furthermore, the identity map from $(\m B, P_{0}^{*}, P_{1}^{*}, <)$ to $(\m B, P_{0}^{\m B}, P_{1}^{\m B}, <^{\m B})$ is an isomorphism between elements of $\Age(\m S(2))$. As such, it induces an isomorphism $k$ from $\m B$ to $\widetilde{\m B}$ which preserves $P_{0}^{*}$ and $P_{1}^{*}$ on $A_{0}$ and permutes them on $A_{1}$. Extending it to some element of $\Aut(S(2))$ finishes the proof.  
\end{proof}

\begin{prop}

$B(\Aut(\m S(2)))$ is trivial.  

\end{prop}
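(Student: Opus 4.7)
The plan is to follow the scheme of Proposition \ref{p:map}. Writing $G = \Aut(\m S(2))$ and $G^{*} = \Aut(\m S(2), P^{*}_{0}, P^{*}_{1})$, it suffices by Theorem \ref{th:UMDF} to show $N(G^{*}) \subseteq (G^{*})^{G}$: granted this, $B(G) = G/(G^{*})^{G}$ is an equicontinuous factor of the proximal flow $\Pi(G) = G/N(G^{*})$ supplied by Theorem \ref{c:SA1}, hence a point. By Lemma \ref{l:N(Aut(S(2)))}, $N(G^{*}) = \Aut(\m S(2), E^{*})$, and any two elements of $\Aut(\m S(2), E^{*}) \setminus G^{*}$ differ by a parts-preserving factor in $G^{*} \subseteq (G^{*})^{G}$ (their product swaps the two parts twice, hence lies in $G^{*}$); so it is enough to place one fixed $\sigma \in \Aut(\m S(2), E^{*}) \setminus G^{*}$ in $(G^{*})^{G}$.

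Fix such a $\sigma$. A crucial preliminary is that $\sigma$ automatically preserves $<$: since the tournament direction in $\m S(2)$ is determined by $<$ together with the $E^{*}$-class relation via the bi-definability with $\Q_{2}$ recalled just before Lemma \ref{l:N(Aut(S(2)))}, any automorphism of $\m S(2)$ preserving $E^{*}$ is forced to preserve $<$. Thus $\sigma$ acts as a $<$-preserving bijection globally swapping the two parts.

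To show $\sigma \in (G^{*})^{G}$, I would argue that for every finite $A \subset \m S(2)$, $\sigma$ coincides on $A$ with some element of $(G^{*})^{G}$. Given $A$, Lemma \ref{l:<} yields $\widetilde{\m A^{*}} \cong \m A^{*}$ in $\m S(2)^{*}$ with $\widetilde A$ and $\sigma(\widetilde A)$ disjoint and $<$-separated. Applying Lemma \ref{l:pr'} to this ordered disjoint pair produces $k \in G$ preserving the partition on $\widetilde A$ and permuting it on $\sigma(\widetilde A)$, while also preserving $<$ on each of the two sets individually. A direct verification shows that $k\sigma k^{-1}$, restricted to $k(\widetilde A)$, preserves both the partition (because $\sigma$'s global swap is cancelled by $k$'s permutation on $\sigma(\widetilde A)$) and the order (because $\sigma$ preserves $<$ and $k$ preserves $<$ on each of $\widetilde A$ and $\sigma(\widetilde A)$). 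By $\m S(2)^{*}$-homogeneity, this restriction extends to some $h_{0} \in G^{*}$, and $k^{-1}h_{0}k \in (G^{*})^{G}$ agrees with $\sigma$ on $\widetilde A$. Finally, pick $\phi \in G^{*}$ with $\phi(\widetilde A) = A$, available by $\m S(2)^{*}$-homogeneity since $\widetilde{\m A^{*}} \cong \m A^{*}$; then $\phi(k^{-1}h_{0}k)\phi^{-1} \in (G^{*})^{G}$ agrees on $A$ with $\phi\sigma\phi^{-1}$, and because $G^{*}$ is normal in $N(G^{*})$ there is $s^{*} \in G^{*}$ with $\phi\sigma\phi^{-1} = s^{*}\sigma$, so $(s^{*})^{-1}\phi(k^{-1}h_{0}k)\phi^{-1} \in (G^{*})^{G}$ is the sought element agreeing with $\sigma$ on $A$.

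The main delicacy I expect is this final transfer step: the lemmas naturally approximate $\sigma$ on the auxiliary copy $\widetilde A$ rather than on $A$ itself, and migrating the approximation back to $A$ requires carefully absorbing the $G^{*}$-correction term $s^{*}$ arising from the non-commutativity of $\sigma$ with elements of $G^{*}$. Once this is handled, $\sigma$ lies in the closure $(G^{*})^{G}$, completing the proof.
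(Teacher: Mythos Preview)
Your proposal is correct and follows essentially the same scheme as the paper's proof: reduce to showing a single part-swapping $\sigma \in \Aut(\m S(2),E^{*})\setminus G^{*}$ lies in $(G^{*})^{G}$, then for a given finite $A$ invoke Lemma~\ref{l:<} and Lemma~\ref{l:pr'} to conjugate $\sigma$ so that on the relevant finite set it preserves the partition, hence agrees there with an element of $G^{*}$. The only difference is organizational: the paper folds the $G^{*}$-element carrying $\widetilde A$ back to $A$ directly into the conjugator $g=j\circ k$, whereas you first approximate on $\widetilde A$ and then transfer to $A$ via a separate $\phi\in G^{*}$ together with the normality correction $s^{*}$; your explicit handling of this transfer is more detailed than the paper's terse conclusion but amounts to the same argument.
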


\begin{proof}

We have seen in Lemma \ref{l:N(Aut(S(2)))} that the normalizer of $\Aut(\m S(2), P^{*} _{0}, P^{*} _{1})$ in $\Aut(\m S(2))$ is $\Aut(\m S(2),E^{*})$. We are going to show that this latter group is contained in $(\Aut(S(2)^{*}))^{\Aut(S(2))}$. This will imply that $$B(\Aut(S(2)))=\Aut(S(2))/(\Aut(S(2)^{*}))^{\Aut(S(2))}$$ is trivial as an equicontinuous factor of $\Aut(S(2))/N(\Aut(S(2)^{*}))$, which is the universal minimal proximal flow of $\Aut(S(2))$ by Theorem \ref{c:SA1}. 

To prove $\Aut(\m S(2),E^{*})\subseteq \Aut(\m S(2)^{*})^{\Aut(\m S(2))}$, let $\sigma \in \Aut(\m S(2),E^{*})\smallsetminus \Aut(\m S(2)^{*})$. As $\sigma$ and $\Aut(S(2){*})$ generate $\Aut(\m S(2),E^{*})$, it suffices to show $$\sigma\in \Aut(\m S(2)^{*})^{\Aut(\m S(2))}$$ 

Let $A$ be a finite subset of $\m S(2)$ and let $\m A$ (resp. $\m A^{*}$) be the finite substructure that it supports in $\m S(2)$ (resp. $\m S(2)^{*}$). By Lemma \ref{l:<}, there exists a copy $\widetilde{\m A^{*}}$ of $\m A^{*}$ in $\m S(2)^{*}$ so that $\widetilde{A^{*}} < \sigma(\widetilde A^{*})$ or $\sigma(\widetilde A^{*})<\widetilde A^{*}$. Lemma \ref{l:pr'} applies, and there exists $k\in \Aut(S(2))$ that preserves $P_{0}^{*}$ and $P_{1}^{*}$ on $\widetilde A$ and permutes them on $\sigma(\widetilde A)$. Because $k(\sigma(\widetilde{\m A}))$ and $\m A^{*}$ are isomorphic as substructures of $\m S(2)^{*}$, there is $j\in G^{*}$ sending $k(\sigma(\widetilde{\m A}))$ on $\m A^{*}$. Set $g = j\circ k$. It sends $\widetilde{\m A}$ to $\m A$, preserves $P_{0}^{*}$ and $P_{1}^{*}$ on $\widetilde{\m A}$ and permutes them on $\sigma(\widetilde{\m A})$. Therefore, the restriction of $g\sigma g^{-1}$ to $A$ preserves $P_{0}^{*}$ and $P_{1}^{*}$. \end{proof}

\section{Comments and questions}

\label{section:Questions}

We close this paper with a question.

\begin{question}
Let $G$ be a Polish group such that $\Pi(G)$ and $B(G)$ are metrizable. Is $M(G)$ necessarily metrizable? 
\end{question}

In view of Theorem \ref{cor}, a positive answer would solve Glasner's problem in a rather strong sense. However, let us mention again that such an outcome would go against the intuition of many experts in the field, including Glasner himself. 

\subsection*{Acknowledgements}

I am grateful to Ita\"i Ben Yaacov for his reference to \cite{BenYaacov16a} and his explanations regarding it; to Julien Melleray for the references regarding simplicity of automorphism groups; to Todor Tsankov for making even simpler the original proof of Theorem \ref{th:UMDF}, for his references and explanations regarding \cite{Tsankov12}, and for the comment of Section \ref{subsection:comments} regarding $\mathrm{SL}(3, \mathbb Z)$; and finally to the anonymous referee for her/his suggestions and comments, which substantially improved the clarity of the paper.

\bibliography{Bib18Mar}
\end{document}